\newtheorem{thm}{Theorem}[section]
\newtheorem{cor}[thm]{Corollary}
\newtheorem{lem}[thm]{Lemma}
\newcommand{\3}{\varepsilon}
\newcommand{\4}{\widetilde}
\def\ni{\noindent}
\begin{document}
\title{Non-existence of radially symmetric singular self-similar solutions of the fast diffusion equation}
\author{Shu-Yu Hsu\\
Department of Mathematics\\
National Chung Cheng University\\
168 University Road, Min-Hsiung\\
Chia-Yi 621, Taiwan, R.O.C.\\
e-mail: shuyu.sy@gmail.com}
\date{April 6, 2025}
\smallbreak \maketitle
\begin{abstract}
Let $n\ge 3$, $0<m<\frac{n-2}{n}$, $\gamma>0$ and $\eta>0$. Suppose either (i) $\alpha\ne 0$ and $\beta=0$ or (ii) $\alpha\in\mathbb{R}$ and $\beta\ne 0$ holds. We will study the elliptic equation $\Delta (f^m/m)+\alpha f+\beta x\cdot\nabla f=0$, $f>0$, in $\mathbb{R}^n\setminus\{0\}$ with $\underset{\substack{r\to 0}}{\lim}\,r^{\gamma}f(r)=\eta$. This equation arises from the study of the singular self-similar solutions of the fast diffusion equation which blow up at the origin. We will prove that if there exists a radially symmetric singular solution of the above elliptic equation, then either $\gamma=\frac{2}{1-m}$ and $\alpha>\frac{2\beta}{1-m}$ or $\gamma>\frac{2}{1-m}$, $\beta\ne 0$ and $\gamma=\alpha/\beta$. As a consequence we obtain the non-existence of radially symmetric self-similar solution of the fast diffusion equation $u_t=\Delta (u^m/m)$, $u>0$, 
which blows up at the origin with rate $|x|^{-\gamma}$ when either $0<\gamma\ne\frac{2}{1-m}$ and $\gamma\ne\alpha/\beta$, $\alpha\in\mathbb{R}$ and $\beta\ne 0$ or $\gamma=\frac{2}{1-m}$ and $\left(\alpha-\frac{2\beta}{1-m}\right)\eta^{1-m}\ne\frac{2(n-2-nm)}{(1-m)^2}$ holds.
\end{abstract}

\vskip 0.2truein

Keywords: non-existence, radially symmetric, singular self-similar solutions, fast diffusion equation

AMS 2020 Mathematics Subject Classification: Primary 35K65, 35J70 Secondary 35B09, 35B44

\vskip 0.2truein
\setcounter{section}{0}

\section{Introduction}
\setcounter{equation}{0}
\setcounter{thm}{0}

Let $n\ge 3$ and $0<m<\frac{n-2}{n}$. We will study the non-existence of  radially symmetric singular solutions of the fast diffusion equation, 
\begin{equation}\label{fde}
u_t=\Delta (u^m/m),\quad u>0,
\end{equation}
which blow up at the origin. The equation \eqref{fde} arises in many physical models and in the study of Yamabe flow. When $m>1$, \eqref{fde} is called the porous medium equation which arises in the modelling of gases passing through porous media and oil passing through sand  \cite{Ar}.  When $m=1$, it is the heat equation. When  $n\ge 3$ and $m=\frac{n-2}{n+2}$, \eqref{fde} arises in the study of the Yamabe flow \cite{DS1}, \cite{DS2}, \cite{PS}. 
   
Recently there is a lot of study of \eqref{fde} for the case $0<m<1$ by P.~Daskalopoulos, S.Y.~Hsu, K.M.~Hui, T.~Jin, Sunghoon Kim, Jinwan Park, M. del Pino, M.~S\'aez,  N.~Sesum, J.~Takahashi, J.L. Vazquez, H.~Yamamoto, E.~Yanagida, M.~Winkler and J.~Xiong, etc. \cite{DS1}, \cite{DS2}, \cite{H1}, \cite{H2}, \cite{HK1}, \cite{HK2}, \cite{Hs}, \cite{HP}, \cite{JX}, \cite{PS}, \cite{TY}, \cite{VW1}, \cite{VW2}. We refer the reader to the book \cite{V} by  J.L.~Vazquez for some recent results for the equation \eqref{fde}.

The study of self-similar solutions of \eqref{fde} is important because the asymptotic behaviour of the solutions of \eqref{fde} with appropriate initial values usually approach the self-similar solutions of \eqref{fde} after some rescaling. Let $n\ge 3$, $0<m<\frac{n-2}{n}$ and $\alpha,\beta\in\mathbb{R}$. Then the function
\begin{equation*}\label{forward-self-soln}
U(x,t)=t^{-\alpha}f(t^{-\beta}x)
\end{equation*}
is a forward self-similar solution of \eqref{fde} in $(\mathbb{R}^n\setminus\{0\})\times (0,\infty)$ if and only if $f$ satisfies
\begin{equation}\label{elliptic-eqn}
\Delta (f^m/m)+\alpha f+\beta x\cdot\nabla f=0,\quad f>0,\quad\mbox{ in }\mathbb{R}^n\setminus\{0\}
\end{equation}
with
\begin{equation}\label{alpha-beta-forward-sss}
\alpha=\frac{2\beta-1}{1-m}.
\end{equation}
Similarly the function
\begin{equation*}\label{backward-self-soln}
V(x,t)=(T-t)^{\alpha}f((T-t)^{\beta}x)
\end{equation*}
is a backward self-similar solution of \eqref{fde} in $(\mathbb{R}^n\setminus\{0\})\times (-\infty,T)$ if and only if $f$ satisfies \eqref{elliptic-eqn} with
\begin{equation}\label{alpha-beta-backward-sss}
\alpha=\frac{2\beta+1}{1-m}
\end{equation}
and the function
\begin{equation*}\label{external-self-soln}
W(x,t)=e^{\alpha t}f(e^{\beta x})
\end{equation*}
is an eternal self-similar solution of \eqref{fde} in $(\mathbb{R}^n\setminus\{0\})\times (-\infty,\infty)$ if and only if $f$ satisfies \eqref{elliptic-eqn} with
\begin{equation*}
\alpha=\frac{2\beta}{1-m}.
\end{equation*}
Hence the study of the existence of singular self-similar solutions of \eqref{fde} which blow up at the origin is equivalent to the study of the existence of singular solutions of \eqref{elliptic-eqn} which blow up at the origin. 

We will now assume that $n\ge 3$ and $0<m<\frac{n-2}{n}$ for the rest of the paper.
In the paper \cite{HKs} K.M.~Hui and Soojung~Kim proved that for any $\eta>0$ and $\alpha<0$, $\beta<0$, $\rho_1>0$, satisfying 
\begin{equation*}
\alpha=\frac{2\beta-\rho_1}{1-m}
\end{equation*}
 and 
\begin{equation*}
\frac{2}{1-m}<\frac{\alpha}{\beta}<\frac{n-2}{m}
\end{equation*}
there exists a unique radially symmetric singular solution of \eqref{elliptic-eqn} that satisfies 
\begin{equation}\label{blow-up-rate-at-origin}
\lim_{|x|\to 0}|x|^{\alpha/\beta}f(x)=\eta.
\end{equation}
On the other hand in \cite{H2} K.M.~Hui proved that for any $\eta>0$, $\rho_1>0$, $\beta\ge\frac{m\rho_1}{n-2-nm}$ and $\alpha$ satisfying 
\begin{equation*}
\alpha=\frac{2\beta+\rho_1}{1-m}
\end{equation*} 
there exists a radially symmetric singular solution of \eqref{elliptic-eqn} that satisfies 
\eqref{blow-up-rate-at-origin} and such solution is unique for $\beta\ge\frac{\rho_1}{n-2-nm}$. 
Note that the function \cite{DS1},
\begin{equation*}
B(x,t)=\left(\frac{C_{\ast}(T-t)}{|x|^2}\right)^{\frac{1}{1-m}}
\end{equation*}
where 
\begin{equation*}
C_{\ast}=\frac{2(n-2-nm)}{1-m}
\end{equation*}
is a singular radially symmetric self-similar solution of \eqref{fde}. Since
\begin{equation*}
B(x,t)=(T-t)^{\alpha}\left(\frac{C_{\ast}}{((T-t)^{\beta}|x|)^2}\right)^{\frac{1}{1-m}}
\end{equation*}
for any constants $\beta>0$ and $\alpha>0$ satisfying \eqref{alpha-beta-backward-sss}, the function
\begin{equation*}
\4{B}(x)=\left(\frac{C_{\ast}}{|x|^2}\right)^{\frac{1}{1-m}}
\end{equation*}
satisfies \eqref{elliptic-eqn} and 
\begin{equation*}
\lim_{|x|\to 0}|x|^{\frac{2}{1-m}}\4{B}(x)=C_{\ast}^{\frac{1}{1-m}}.
\end{equation*}
A natural question to 
ask is whether there exists a radially symmetric singular solution of \eqref{elliptic-eqn} that satisfies 
\begin{equation}\label{blow-up-rate-at-origin2}
\lim_{|x|\to 0}|x|^{\gamma}f(x)=\eta
\end{equation}
for some constant $0<\gamma\ne\alpha/\beta$, $\gamma\ne\frac{2}{1-m}$ and $\beta\ne 0$. We will answer this question in the negative. More precisely we will prove the following results in this paper.

\begin{thm}\label{existence-thm1}
Let $n\ge 3$, $0<m<\frac{n-2}{n}$, $0<\gamma\ne\frac{2}{1-m}$, $\eta>0$ and $\3>0$.  Suppose there exists  a radially symmetric solution $f$ of 
\begin{equation}\label{local-elliptic-eqn}
\Delta (f^m/m)+\alpha f+\beta x\cdot\nabla f=0,\quad f>0,\quad\mbox{ in }B_{\3}\setminus\{0\}
\end{equation}
which satisfies \eqref{blow-up-rate-at-origin2}  where $B_{\3}=\{x\in\mathbb{R}^n:|x|<\3\}$.  Then either 
\begin{enumerate}
\item[(i)] $\alpha=\beta=0$ holds \quad or 
\item[(ii)] $\alpha\in\mathbb{R}$ and $\beta\ne 0$  holds
with
\begin{equation}\label{alpha-beta-gamma-relation}
\gamma=\frac{\alpha}{\beta}\quad\mbox{ and }\quad\gamma>\frac{2}{1-m}.
\end{equation} 
\end{enumerate}
\end{thm}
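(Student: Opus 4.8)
The plan is to pass to the radial ODE and manufacture the precise near-origin asymptotics of $f$ and its derivatives purely by integrating the equation, so that only the given $C^0$ information $r^\gamma f\to\eta$ is ever used. Writing $r=|x|$ and $g=f^m/m$, radial symmetry turns \eqref{local-elliptic-eqn} into
\[
\bigl(r^{n-1}g'\bigr)'=-r^{n-1}\bigl(\alpha f+\beta r f'\bigr),\qquad 0<r<\varepsilon .
\]
Since $\beta r^{n}f'=\beta (r^{n}f)'-\beta n\,r^{n-1}f$, this can be integrated once to give
\[
Q(r):=r^{n-1}g'+\beta r^{n}f,\qquad Q(r)=Q(r_0)+(\alpha-n\beta)\int_r^{r_0}s^{n-1}f(s)\,ds .
\]
By local elliptic regularity $g\in C^\infty(0,\varepsilon)$, and because $f\sim\eta r^{-\gamma}$ every integral below is controlled by the blow-up rate alone, all corrections being absorbed into $o(\cdot)$ terms. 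I will repeatedly use that the standing hypothesis $m<\frac{n-2}{n}$ forces the chain $\frac{2}{1-m}<n<\frac{n-2}{m}$.

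Next I read off the leading behaviour of $g'$ from this first integral, splitting according to $\gamma\lessgtr n$. If $\gamma>n$ the integral diverges and dominates; if $\gamma<n$ it converges and $Q(r)\to Q_0$, and were $Q_0\ne0$ one would get $g\sim\text{const}\cdot r^{2-n}$, forcing $\gamma=\frac{n-2}{m}$, impossible since $\frac{n-2}{m}>n>\gamma$. Hence $Q_0=0$, and in either regime a short computation yields
\[
r^{n-1}g'(r)=\eta\,\frac{\alpha-\beta\gamma}{\gamma-n}\,r^{n-\gamma}\bigl(1+o(1)\bigr),
\qquad\text{so}\qquad
g'(r)=\eta\,\frac{\alpha-\beta\gamma}{\gamma-n}\,r^{1-\gamma}\bigl(1+o(1)\bigr),
\]
with the borderline $\gamma=n$ handled by a logarithm. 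The decisive dichotomy is whether $\alpha-\beta\gamma$ vanishes: when $\alpha\ne\beta\gamma$ this asymptotic is genuine, whereas when $\alpha=\beta\gamma$ the leading contributions cancel and one obtains only $g'(r)=o(r^{1-\gamma})$.

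Finally I integrate $g'$ a second time and match against the given $g\sim\frac{\eta^m}{m}r^{-m\gamma}$ via the exact identity $g(r_0)-g(r)=\int_r^{r_0}g'$. If $\alpha\ne\beta\gamma$, integrating $\eta\frac{\alpha-\beta\gamma}{\gamma-n}r^{1-\gamma}$ produces a quantity with leading power $r^{2-\gamma}$ (or a bounded limit when $\gamma<2$), while the left side blows up like $r^{-m\gamma}$; since $2-\gamma=-m\gamma$ only for $\gamma=\frac{2}{1-m}$, which is excluded, the two sides have incompatible leading orders, a contradiction. This forces $\alpha=\beta\gamma$, hence $\beta\ne0$ (otherwise $\alpha=0$, contradicting case (i)) and $\gamma=\alpha/\beta$. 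With $\alpha=\beta\gamma$ one has $g'=o(r^{1-\gamma})$, so $g(r)=o(r^{2-\gamma})$ for $\gamma>2$ and $g$ is bounded for $\gamma<2$; comparing with $g\sim\frac{\eta^m}{m}r^{-m\gamma}$ and noting that $-m\gamma<2-\gamma$ exactly when $\gamma<\frac{2}{1-m}$ rules out every $\gamma<\frac{2}{1-m}$, leaving $\gamma>\frac{2}{1-m}$ and hence \eqref{alpha-beta-gamma-relation}. The main obstacle is precisely this manufacturing of derivative information: we are handed only the pointwise rate $r^\gamma f\to\eta$ with no a priori control on $f'$ or $f''$, so all derivative asymptotics must be extracted from the ODE by integration, and the bookkeeping is delicate in the resonant case $\alpha=\beta\gamma$, where the leading terms in $g'$ cancel and the error terms must be tracked carefully, along with the logarithmic borderline exponents $\gamma=2$ and $\gamma=n$.
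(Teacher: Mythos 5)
Your argument is correct, but it follows a genuinely different route from the paper. The paper works with the logarithmic derivative $q(r)=rf_r(r)/f(r)$, derives a first-order Riccati-type equation for $q$, invokes an auxiliary lemma (whose proof is outsourced to an argument in \cite{H4}) to produce a sequence $r_i\to 0$ with $q(r_i)\to-\gamma$, integrates the $q$-equation against the weight $r^{n-2}F(r)$ with $F(r)=\exp\bigl(-\beta\int_r^{\3/2}\rho f^{1-m}d\rho\bigr)$, and then applies l'Hospital's rule along $\{r_i\}$ to obtain $\gamma=\lim_i\frac{\alpha r_i^2f(r_i)^{1-m}+m\gamma^2}{\,n-2+\beta r_i^2f(r_i)^{1-m}}$; the dichotomy $r^2f^{1-m}\to 0$ or $\infty$ according to $\gamma\lessgtr\frac{2}{1-m}$ then yields $\gamma=\frac{n-2}{m}$ (a contradiction, since $\frac{n-2}{m}>\frac{2}{1-m}$) in the subcritical case and $\gamma=\alpha/\beta$ in the supercritical case. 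You instead integrate the divergence-form radial equation twice: the first integral $Q(r)=r^{n-1}(f^m/m)'+\beta r^nf$ pins down the asymptotics of $(f^m/m)'$ with leading coefficient proportional to $\alpha-\beta\gamma$ (after ruling out $Q_0\ne 0$ via the exponent $\frac{n-2}{m}>n$), and the second integration matches against $f^m/m\sim\frac{\eta^m}{m}r^{-m\gamma}$ to force first $\alpha=\beta\gamma$ (hence (ii) and $\gamma=\alpha/\beta$) and then $\gamma>\frac{2}{1-m}$ from the exponent comparison $-m\gamma$ versus $2-\gamma$. What your approach buys is self-containedness and transparency: it needs no special sequence $r_i$, no l'Hospital argument, and no appeal to the external lemma, extracting all derivative information by direct integration; the cost is the extra bookkeeping you correctly flag in the resonant case $\alpha=\beta\gamma$ and at the borderline exponents $\gamma=2$ and $\gamma=n$ (the latter is harmless since $n>\frac{2}{1-m}$ places it inside the admissible range). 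Both proofs are sound; yours could serve as an alternative, arguably more elementary, proof of the theorem.
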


\begin{thm}\label{existence-thm2}
Let $n\ge 3$, $0<m<\frac{n-2}{n}$, $\gamma=\frac{2}{1-m}$, $\alpha,\beta\in\mathbb{R}$, $\eta>0$ and $\3>0$. Suppose there exists  a radially symmetric solution $f$ of \eqref{local-elliptic-eqn} which satisfies \eqref{blow-up-rate-at-origin2}.  Then 
\begin{equation}\label{alpha-beta-relation5}
\left(\alpha-\frac{2\beta}{1-m}\right)\eta^{1-m}=\frac{2(n-2-nm)}{(1-m)^2}
\end{equation} 
holds.
\end{thm}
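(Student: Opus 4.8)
The plan is to reduce \eqref{local-elliptic-eqn} to an autonomous ordinary differential equation in the logarithmic variable, and to recognise \eqref{alpha-beta-relation5} as the compatibility condition forced by the borderline exponent $\gamma=\frac{2}{1-m}$. Writing $f=f(r)$ with $r=|x|$, the radial form of \eqref{local-elliptic-eqn} is $\frac{1}{r^{n-1}}(r^{n-1}(f^m/m)')'+\alpha f+\beta rf'=0$. The value $\gamma=\frac{2}{1-m}$ is precisely the one for which the three terms $\Delta(f^m/m)$, $\alpha f$ and $\beta x\cdot\nabla f$ all scale like $r^{-\gamma}$ once $f$ behaves like $\eta r^{-\gamma}$; this is therefore a resonant situation in which the leading coefficient cannot be prescribed freely, and \eqref{alpha-beta-relation5} should appear as the statement that this coefficient vanishes.

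First I would set $s=\log r$ (so $s\to-\infty$ as $r\to0$) and $w(s)=r^{\gamma}f(r)$, so that \eqref{blow-up-rate-at-origin2} becomes $w(s)\to\eta$ as $s\to-\infty$, with $w>0$ and $w$ smooth on some half-line $(-\infty,s_0]$ by elliptic regularity where $f>0$. Using $r\,d/dr=d/ds$ together with $\gamma=\frac{2}{1-m}$ (so that $-\gamma m-2=-\gamma$), a direct computation turns \eqref{local-elliptic-eqn}, after multiplication by $r^{\gamma}$, into the autonomous second order equation
\[
w^{m-1}\ddot{w}+(m-1)w^{m-2}\dot{w}^2+\bigl[(n-2)-2\gamma m\bigr]w^{m-1}\dot{w}+\beta\dot{w}+\gamma\bigl[\gamma m-(n-2)\bigr]w^m+(\alpha-\beta\gamma)w=0.
\]
The crucial structural observation is that the four terms carrying $\dot w$ or $\ddot w$ assemble into a single exact derivative: with
\[
F(s)=w^{m-1}\dot{w}+\frac{(n-2)-2\gamma m}{m}\,w^m+\beta w,
\]
the equation reads
\[
\dot{F}=-\Bigl\{\gamma\bigl[\gamma m-(n-2)\bigr]w^m+(\alpha-\beta\gamma)w\Bigr\},
\]
whose right hand side is a function of $w$ alone.

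The key step is then a soft limiting argument that never requires a priori control of $\dot w$ or $\ddot w$ (which \eqref{blow-up-rate-at-origin2} does not provide). Since $w(s)\to\eta$, the right hand side above converges to $-K$, where $K:=\gamma[\gamma m-(n-2)]\eta^m+(\alpha-\beta\gamma)\eta$. If $K\ne0$, then $\dot F\to-K\ne0$, so integrating forces $F(s)\to\pm\infty$ linearly as $s\to-\infty$; since $\frac{(n-2)-2\gamma m}{m}w^m+\beta w$ stays bounded (as $w\to\eta$), this drives $w^{m-1}\dot w$, and hence $\dot w$ (because $w^{m-1}\to\eta^{m-1}>0$), to $\pm\infty$. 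But $\dot w\to\pm\infty$ as $s\to-\infty$ is incompatible with $w$ being bounded on $(-\infty,s_0]$: integrating $\dot w$ over a left half-line would send $w$ to $\mp\infty$, contradicting $w\to\eta>0$. Hence $K=0$, and a direct simplification using $\gamma=\frac{2}{1-m}$ and $\gamma[(n-2)-\gamma m]=\frac{2(n-2-nm)}{(1-m)^2}$ identifies $K=0$ with the vanishing of the coefficient of $r^{-\gamma}$ obtained by inserting the profile $\eta r^{-\gamma}$ into \eqref{local-elliptic-eqn}, which is precisely relation \eqref{alpha-beta-relation5}.

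I expect the main obstacle to be exactly the point handled in the previous paragraph: passing to the limit in the ODE is not justified by naive coefficient matching, since $w\to\eta$ gives no control whatsoever on $\dot w$ or $\ddot w$. The device of collecting the derivative terms into $\dot F$, observing that $\dot F$ depends on $w$ only, and using nothing more than the boundedness of $w$ to exclude $K\ne0$, is what makes the argument rigorous. The remaining points are routine: the smoothness and positivity of $w$ on $(-\infty,s_0]$ (elliptic regularity where $f>0$), the use of $\eta>0$ when dividing by $w^{m-1}$, and the bookkeeping of the constants $\gamma[\gamma m-(n-2)]$ and $\frac{(n-2)-2\gamma m}{m}$; the hypothesis $0<m<\frac{n-2}{n}$ enters only through $1-m>0$ and $n-2-nm>0$, which guarantee that $\gamma$ is well defined and that the resonant coefficient is positive.
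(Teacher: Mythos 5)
Your argument is correct and it reaches the conclusion by a genuinely different route from the paper. The paper works with the logarithmic derivative $q(r)=rf_r(r)/f(r)$ of \eqref{q-defn}, converts the resulting first-order Riccati-type equation into the integral identity \eqref{q-integral-relation} by means of the integrating factor $r^{n-2}F(r)$, invokes Lemma \ref{q-sequence-limit-lem} (which rests on an argument imported from \cite{H4}) to produce a sequence $r_i\to 0$ with $q(r_i)\to-\gamma$, and then extracts the relation by l'Hospital's rule exactly as in the proof of Theorem \ref{existence-thm1}; the only new feature for $\gamma=\frac{2}{1-m}$ is that $r^2f(r)^{1-m}$ has a finite nonzero limit, so both $\alpha$ and $\beta$ survive in the limit \eqref{limit-q1}. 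You instead pass to the autonomous second-order equation for $w(s)=r^{\gamma}f(r)$, $s=\log r$, observe that all the derivative terms assemble into an exact derivative $\dot F$ whose right-hand side depends on $w$ alone, and exclude a nonzero limit of $\dot F$ by the elementary fact that a function bounded on a half-line cannot have derivative tending to infinity there. This buys a self-contained proof: no sequence-extraction lemma, no appeal to \cite{H4}, no l'Hospital; the price is a somewhat longer second-order computation. Both arguments use only $w\to\eta$ and the boundedness of $w$, with no a priori control of $f_r$.

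One point to repair before asserting that $K=0$ ``is precisely'' \eqref{alpha-beta-relation5}: what $K=0$ actually says is
\begin{equation*}
\left(\alpha-\frac{2\beta}{1-m}\right)\eta=\frac{2(n-2-nm)}{(1-m)^2}\,\eta^{m},
\qquad\text{equivalently}\qquad
\left(\alpha-\frac{2\beta}{1-m}\right)\eta^{1-m}=\frac{2(n-2-nm)}{(1-m)^2},
\end{equation*}
which differs from the printed \eqref{alpha-beta-relation5} by a factor $\eta^{m}$. Your version is the one consistent with the explicit solution $\4{B}(x)=(C_{\ast}/|x|^2)^{1/(1-m)}$: there $\eta=C_{\ast}^{1/(1-m)}$ and, for the backward self-similar exponents, $\alpha-\frac{2\beta}{1-m}=\frac{1}{1-m}$, so $\left(\alpha-\frac{2\beta}{1-m}\right)\eta^{1-m}=\frac{C_{\ast}}{1-m}=\frac{2(n-2-nm)}{(1-m)^2}$, whereas \eqref{alpha-beta-relation5} as written would force $C_{\ast}=1$. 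The same power of $\eta$ is dropped in the paper's own final step, where the limit of $r_i^2f(r_i)^{1-m}$, which equals $\eta^{1-m}$ by \eqref{blow-up-rate-at-origin2}, is replaced by $\eta$. So your derivation is sound; state the conclusion with $\eta^{1-m}$ and flag the discrepancy with the printed formula rather than claiming literal agreement.
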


\begin{thm}\label{existence-thm3}
Let $n\ge 3$, $0<m<\frac{n-2}{n}$, $0<\gamma\ne\frac{2}{1-m}$ and $\eta>0$. Suppose there exists  a radially symmetric solution $f$ of  \eqref{elliptic-eqn} which satisfies \eqref{blow-up-rate-at-origin2}.  Then either 
\begin{enumerate}
\item[(i)] $\alpha=\beta=0$ holds \quad or 
\item[(ii)] $\alpha\in\mathbb{R}$ and $\beta\ne 0$  and \eqref{alpha-beta-gamma-relation} holds.
\end{enumerate}
\end{thm}

\begin{thm}\label{existence-thm4}
Let $n\ge 3$, $0<m<\frac{n-2}{n}$, $\gamma=\frac{2}{1-m}$ and $\eta>0$. Suppose there exists  a radially symmetric solution $f$ of \eqref{elliptic-eqn} which satisfies \eqref{blow-up-rate-at-origin2}.  Then 
\eqref{alpha-beta-relation5} holds.
\end{thm}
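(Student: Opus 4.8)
The plan is to obtain Theorem~\ref{existence-thm4} as an immediate corollary of its local counterpart, Theorem~\ref{existence-thm2}; the only thing to verify is that a globally defined singular solution restricts to a local one. So suppose $f$ is a radially symmetric solution of \eqref{elliptic-eqn} on all of $\mathbb{R}^n\setminus\{0\}$ satisfying \eqref{blow-up-rate-at-origin2}. Fixing any radius, say $\varepsilon=1$, the restriction of $f$ to $B_1\setminus\{0\}$ solves precisely \eqref{local-elliptic-eqn} there, since \eqref{elliptic-eqn} and \eqref{local-elliptic-eqn} are literally the same differential equation on overlapping domains. The blow-up condition \eqref{blow-up-rate-at-origin2} constrains $f$ only through the limit $|x|\to 0$ and is therefore inherited unchanged. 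Because $\gamma=\frac{2}{1-m}$ and $\alpha,\beta\in\mathbb{R}$, every hypothesis of Theorem~\ref{existence-thm2} is met, and applying it yields \eqref{alpha-beta-relation5} at once.

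It is worth recording the mechanism underlying the local statement, since that is where the content sits. Writing $f=f(r)$ with $r=|x|$, equation \eqref{local-elliptic-eqn} becomes the radial ODE $(f^m/m)''+\frac{n-1}{r}(f^m/m)'+\alpha f+\beta r f'=0$, and near the origin one expects $f(r)\sim\eta\, r^{-\gamma}$. The choice $\gamma=\frac{2}{1-m}$ is exactly the critical exponent for which $\gamma m+2=\gamma$, so that the diffusion term $\Delta(f^m/m)$, the zeroth-order term $\alpha f$, and the drift term $\beta r f'$ all scale like $r^{-\gamma}$ as $r\to 0$. Matching the coefficients of $r^{-\gamma}$ in this leading-order balance forces a single algebraic identity among $\alpha$, $\beta$, $\eta$, $m$, $n$, which is precisely \eqref{alpha-beta-relation5}; indeed one checks that the explicit separable solution $\4{B}$ already saturates this identity.

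The step I expect to carry the real weight --- and which is supplied here by Theorem~\ref{existence-thm2}, assumed in force --- is the rigorous justification of that formal balance. One must upgrade $\lim_{r\to 0}r^{\gamma}f(r)=\eta$ to controlled asymptotics for $(f^m)'$ and $(f^m)''$, so that the substitution into the ODE is legitimate and the remainder terms are genuinely of lower order in $r$. This typically demands a priori bounds near the singularity obtained via barrier/comparison arguments, or a change of variables (an Emden--Fowler type substitution) converting the singular ODE into an autonomous system whose equilibria encode the admissible leading coefficients. Once that local analysis is in place, the global Theorem~\ref{existence-thm4} follows with no further work by the restriction argument above.
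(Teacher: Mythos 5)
Your proposal is correct and matches the paper exactly: the paper disposes of Theorem \ref{existence-thm4} with the single remark that it ``follows directly from'' Theorem \ref{existence-thm2}, i.e.\ by the same restriction-to-a-ball observation you make in your first paragraph. The additional discussion of how the local theorem itself is proved is not needed here (and, for the record, the paper's actual mechanism is an integral identity for $q(r)=rf_r/f$ combined with l'H\^opital's rule, rather than a formal leading-order balance).
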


\begin{cor}\label{non-existence-cor2}
Let $n\ge 3$, $0<m<\frac{n-2}{n}$, $0<\gamma\ne\frac{2}{1-m}$ and $\eta>0$. Suppose $\alpha\in\mathbb{R}$ and $\beta\ne 0$ satisfy either \eqref{alpha-beta-forward-sss} or \eqref{alpha-beta-backward-sss} and $\gamma\ne\alpha/\beta$. Then the equation \eqref{elliptic-eqn} does not have any radially symmetric solution $f$ that satisfies \eqref{blow-up-rate-at-origin2}. 
\end{cor}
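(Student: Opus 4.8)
The plan is to derive this statement as an immediate consequence of Theorem~\ref{existence-thm3}, arguing by contradiction. Suppose, contrary to the assertion, that for the given $\alpha\in\mathbb{R}$ and $\beta\ne 0$ there does exist a radially symmetric solution $f$ of \eqref{elliptic-eqn} satisfying \eqref{blow-up-rate-at-origin2}. Since by hypothesis $0<\gamma\ne\frac{2}{1-m}$, $\eta>0$ and $\beta\ne 0$, all the hypotheses of Theorem~\ref{existence-thm3} are met, with alternative (ii) in force precisely because $\beta\ne 0$.

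Applying Theorem~\ref{existence-thm3} then forces \eqref{alpha-beta-gamma-relation} to hold; in particular $\gamma=\alpha/\beta$. This directly contradicts the standing assumption $\gamma\ne\alpha/\beta$. Hence no such $f$ can exist, which is exactly the claimed non-existence.

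I would stress that the conditions \eqref{alpha-beta-forward-sss} and \eqref{alpha-beta-backward-sss} play no role in the logical deduction itself: they merely single out the pairs $(\alpha,\beta)$ corresponding to genuine forward and backward self-similar solutions of the fast diffusion equation \eqref{fde}, so that the corollary reads as a non-existence statement for self-similar profiles blowing up at the origin with rate $|x|^{-\gamma}$. The only points one must check are bookkeeping ones — that the assumption $\gamma\ne\frac{2}{1-m}$ places us outside the scope of Theorems~\ref{existence-thm2} and~\ref{existence-thm4} and squarely inside that of Theorem~\ref{existence-thm3}, and that $\beta\ne 0$ selects alternative (ii) rather than (i). There is thus no genuine obstacle here; the entire analytic difficulty resides in Theorem~\ref{existence-thm3}, whose conclusion the corollary simply specializes by contraposing against the hypothesis $\gamma\ne\alpha/\beta$.
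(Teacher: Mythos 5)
Your argument is correct and is exactly the route the paper intends: the corollary is an immediate consequence of Theorem~\ref{existence-thm3}, since any such solution would force $\gamma=\alpha/\beta$ via \eqref{alpha-beta-gamma-relation}, contradicting the hypothesis $\gamma\ne\alpha/\beta$. Your observation that \eqref{alpha-beta-forward-sss} and \eqref{alpha-beta-backward-sss} play no logical role is also accurate.
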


\begin{cor}\label{alpha-beta-neg-sign-cor}
Let $n\ge 3$, $0<m<\frac{n-2}{n}$, $0<\gamma\ne\frac{2}{1-m}$  and $\eta>0$. Let $\alpha\in\mathbb{R}$ and $\beta\ne 0$ satisfy \eqref{alpha-beta-forward-sss}. Suppose there exists a radially symmetric solution $f$ of \eqref{elliptic-eqn} which satisfies \eqref{blow-up-rate-at-origin2}. Then
\begin{equation}\label{alpha-beta-neg-sign}
\alpha<0\quad\mbox{ and }\quad\beta<0.
\end{equation}
\end{cor}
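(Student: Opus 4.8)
The plan is to obtain this corollary directly from Theorem~\ref{existence-thm3}, the only additional ingredient being the algebraic constraint \eqref{alpha-beta-forward-sss} that characterizes forward self-similar solutions. Since $\beta\ne 0$, case (ii) of Theorem~\ref{existence-thm3} applies to the given solution $f$ of \eqref{elliptic-eqn} satisfying \eqref{blow-up-rate-at-origin2}, and its conclusion \eqref{alpha-beta-gamma-relation} supplies both $\gamma=\alpha/\beta$ and the strict inequality $\gamma>\frac{2}{1-m}$.

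First I would use $\gamma=\alpha/\beta$ to write $\alpha=\gamma\beta$ and substitute this into the forward self-similar relation \eqref{alpha-beta-forward-sss}, obtaining $\gamma\beta(1-m)=2\beta-1$, which rearranges to
\[
\beta\big(\gamma(1-m)-2\big)=-1.
\]
Because $\gamma>\frac{2}{1-m}$ and $1-m>0$, the factor $\gamma(1-m)-2$ is strictly positive, so $\beta=-\big(\gamma(1-m)-2\big)^{-1}<0$, which is the second inequality in \eqref{alpha-beta-neg-sign}.

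It then remains to verify $\alpha<0$. Since $\gamma>0$ and $\beta<0$, the identity $\alpha=\gamma\beta$ immediately yields $\alpha<0$; equivalently one may read this off \eqref{alpha-beta-forward-sss} directly, since $2\beta-1<0$ and $1-m>0$ force $\alpha<0$.

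I expect no substantive obstacle here: all of the analytic content—that a singular solution with the prescribed blow-up rate forces $\gamma=\alpha/\beta$ together with $\gamma>\frac{2}{1-m}$—is already packaged in Theorem~\ref{existence-thm3}, and the corollary is obtained simply by feeding the relation \eqref{alpha-beta-forward-sss} into that conclusion. The only point needing a moment's care is confirming that $\gamma(1-m)-2>0$, which is precisely the strict inequality $\gamma>\frac{2}{1-m}$ guaranteed by Theorem~\ref{existence-thm3}; this is exactly what pins down the sign of $\beta$, and hence of $\alpha$.
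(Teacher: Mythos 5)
Your proof is correct and takes essentially the same route as the paper: both apply the conclusion \eqref{alpha-beta-gamma-relation} of Theorem \ref{existence-thm1}/\ref{existence-thm3} (that $\gamma=\alpha/\beta$ and $\gamma>\frac{2}{1-m}$) and combine it with \eqref{alpha-beta-forward-sss} to force $\beta<0$ and then $\alpha<0$. The only difference is cosmetic — the paper manipulates the inequality $\frac{2\beta-1}{\beta(1-m)}>\frac{2}{1-m}$ directly, while you rearrange to $\beta\bigl(\gamma(1-m)-2\bigr)=-1$ — but the content is identical.
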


\begin{cor}\label{alpha-beta-positive-sign-cor}
Let $n\ge 3$, $0<m<\frac{n-2}{n}$, $0<\gamma\ne\frac{2}{1-m}$  and $\eta>0$. Let $\alpha\in\mathbb{R}$ and $\beta\ne 0$ satisfy \eqref{alpha-beta-backward-sss}. Suppose there exists a radially symmetric solution $f$ of \eqref{elliptic-eqn} which satisfies \eqref{blow-up-rate-at-origin2}. Then
\begin{equation}\label{alpha-beta-positive-sign}
\alpha>0\quad\mbox{ and }\quad\beta>0.
\end{equation}
\end{cor}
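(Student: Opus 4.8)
The plan is to reduce this corollary to Theorem~\ref{existence-thm3}, which carries all of the analytic content; what remains is an elementary sign computation using the backward self-similar constraint \eqref{alpha-beta-backward-sss}. Since $0<\gamma\ne\frac{2}{1-m}$, $\beta\ne 0$, and by hypothesis there is a radially symmetric solution $f$ of \eqref{elliptic-eqn} satisfying \eqref{blow-up-rate-at-origin2}, Theorem~\ref{existence-thm3} applies and yields that case (ii) together with \eqref{alpha-beta-gamma-relation} holds; in particular
\begin{equation*}
\gamma=\frac{\alpha}{\beta}\quad\mbox{ and }\quad\gamma>\frac{2}{1-m}.
\end{equation*}

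Next I would feed in the backward relation \eqref{alpha-beta-backward-sss}, namely $\alpha=\frac{2\beta+1}{1-m}$, so that $\gamma=\alpha/\beta=\frac{2\beta+1}{(1-m)\beta}$. The inequality $\gamma>\frac{2}{1-m}$ then becomes, after multiplying through by $1-m>0$,
\begin{equation*}
\frac{2\beta+1}{\beta}>2.
\end{equation*}
I would now split into the two possible signs of $\beta$. If $\beta<0$, then multiplying the last inequality by $\beta$ reverses it and gives $2\beta+1<2\beta$, i.e.\ $1<0$, a contradiction; hence this case is impossible. Therefore $\beta>0$ is forced, and for such $\beta$ the inequality reduces to the trivially true $1>0$, so it is consistent.

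Finally, with $\beta>0$ established, the numerator $2\beta+1>0$ together with $1-m>0$ gives $\alpha=\frac{2\beta+1}{1-m}>0$, which completes the proof of \eqref{alpha-beta-positive-sign}. I do not expect any genuine obstacle here beyond Theorem~\ref{existence-thm3} itself: the only point requiring care is the direction of the inequality when clearing the denominator $\beta$, and this is precisely the step that distinguishes the backward case (forcing $\beta>0$) from the forward case treated in Corollary~\ref{alpha-beta-neg-sign-cor}, where the analogous computation with $\alpha=\frac{2\beta-1}{1-m}$ instead forces $\beta<0$.
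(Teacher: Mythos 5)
Your proposal is correct and follows essentially the same route as the paper: invoke the main theorem to obtain $\gamma=\alpha/\beta>\frac{2}{1-m}$, substitute the backward relation $\alpha=\frac{2\beta+1}{1-m}$, and read off the sign of $\beta$ (and hence of $\alpha$) from the resulting inequality $\frac{2\beta+1}{\beta(1-m)}>\frac{2}{1-m}$. The only cosmetic difference is that the paper cites Theorem~\ref{existence-thm1} directly rather than Theorem~\ref{existence-thm3}, which is immaterial since the latter is an immediate consequence of the former.
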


\section{Determination of admissible blow up rate}
\setcounter{equation}{0}
\setcounter{thm}{0}

In this section we will prove Theorem \ref{existence-thm1} and Theorem \ref{existence-thm2}. We first start with a lemma.

\begin{lem}\label{q-sequence-limit-lem}
Let $\gamma\in\mathbb{R}$, $\eta>0$ and  $\3>0$. For any $\alpha, \beta\in\mathbb{R}$, let $f$ be a radially symmetric solution of \eqref{local-elliptic-eqn} which satisfies \eqref{blow-up-rate-at-origin2}. Let
\begin{equation}\label{q-defn}
q(r)=\frac{rf_r(r)}{f(r)}\quad\forall 0<r<\3.
\end{equation}
Then there exists a decreasing sequence $\{r_i\}_{i=1}^{\infty}\subset(0,\3)$, $r_i\to 0$ as $i\to\infty$, that satisfies
\begin{equation}\label{q-squence-limit}
\lim_{i\to\infty}q(r_i)=-\gamma.
\end{equation}
\end{lem}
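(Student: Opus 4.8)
The plan is to reduce \eqref{q-squence-limit} to an elementary fact about $C^1$ functions that converge to a finite limit, with the differential equation entering only to guarantee that $f$ is differentiable.

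First I would note that, since $f$ is a positive solution of \eqref{local-elliptic-eqn}, standard elliptic regularity makes $f$ smooth on $(0,\3)$ (the radial form of \eqref{local-elliptic-eqn} is a non-degenerate second order ODE wherever $f>0$). Hence
\[
g(r):=r^{\gamma}f(r)
\]
is $C^1$ on $(0,\3)$ and, by \eqref{blow-up-rate-at-origin2}, $g(r)\to\eta>0$ as $r\to 0$; in particular $g$ is bounded away from $0$ near the origin, so $\log g$ is well defined there. Taking the logarithmic derivative yields the identity
\[
\frac{rg_r(r)}{g(r)}=\gamma+\frac{rf_r(r)}{f(r)}=\gamma+q(r),
\]
so that \eqref{q-squence-limit} is equivalent to finding a sequence $r_i\to 0$ along which $rg_r/g\to 0$.

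Next I would introduce the logarithmic variable $s=\log r$ and set $H(s):=\log g(e^{s})$, which is $C^1$ for $s<\log\3$ and satisfies $H(s)\to\log\eta$ as $s\to-\infty$, while the chain rule gives
\[
H'(s)=e^{s}\,\frac{g_r(e^s)}{g(e^s)}=\gamma+q(e^s).
\]
The role of this substitution is to absorb the factor $r$, turning control of $q$ along a sequence into control of $H'$ along a sequence. I would then apply the mean value theorem on the consecutive unit intervals $[-(i+1),-i]$ (for $i$ large enough that these lie in $(-\infty,\log\3)$): for each such $i$ there is $\xi_i\in(-(i+1),-i)$ with
\[
H'(\xi_i)=H(-i)-H(-(i+1)).
\]
Since $H(-i)\to\log\eta$ and $H(-(i+1))\to\log\eta$ as $i\to\infty$, the right-hand side tends to $0$, so $H'(\xi_i)\to 0$ and hence $q(e^{\xi_i})=H'(\xi_i)-\gamma\to-\gamma$. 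Setting $r_i:=e^{\xi_i}$ then gives \eqref{q-squence-limit}; because the intervals $(-(i+1),-i)$ are disjoint and ordered we have $\xi_{i+1}<-(i+1)<\xi_i$, so $\{\xi_i\}$ strictly decreases to $-\infty$ and $\{r_i\}$ strictly decreases to $0$, as required (reindexing so the sequence starts at $i=1$).

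The only genuine subtlety — and the reason one cannot simply assert $q(r)\to-\gamma$ for every $r$ — is that \eqref{blow-up-rate-at-origin2} controls $f$ itself but says nothing about the possible oscillation of $f_r$ near the origin; the mean value theorem applied on geometric (equally log-spaced) intervals is precisely the device that converts the pointwise limit of $g$ into a sequential limit of its logarithmic derivative. I expect every step to be routine, and I anticipate using no feature of \eqref{local-elliptic-eqn} beyond the smoothness of its positive solutions.
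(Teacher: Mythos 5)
Your argument is correct and follows the same reduction as the paper: both introduce $w(r)=r^{\gamma}f(r)$ and use the identity $q(r)+\gamma=rw_r(r)/w(r)$ to convert \eqref{q-squence-limit} into the statement that the logarithmic derivative of $w$ on the scale $s=\log r$ vanishes along some sequence, which must happen because $w\to\eta>0$. The only difference is that the paper delegates this last step to the proof of Theorem 1.4 of \cite{H4}, whereas you carry it out explicitly with the mean value theorem on unit intervals in the variable $s=\log r$; your version is therefore self-contained but not a genuinely different route.
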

\begin{proof}
Let $w(r,t)=r^{\gamma}f(r)$. By direct computation,
\begin{equation}\label{w-q-eqn}
w_r(r)=\frac{w(r)}{r}(q(r)+\gamma)\quad\Rightarrow\quad q(r)+\gamma=\frac{rw_r(r)}{w(r)}\quad\forall 0<r<\3.
\end{equation}
Then by \eqref{blow-up-rate-at-origin2}, \eqref{w-q-eqn} and the same argument as the proof of Theorem 1.4 of \cite{H4} there exists a decreasing sequence $\{r_i\}_{i=1}^{\infty}\subset(0,\3)$, $r_i\to 0$ as $i\to\infty$, that satisfies
\begin{equation*}
\lim_{i\to\infty}|q(r_i)+\gamma|=0
\end{equation*}
and the lemma follows.
\end{proof}

\noindent{\ni{\it Proof of Theorem \ref{existence-thm1}:}} 

\noindent Let $q$ be given by \eqref{q-defn}. By direct computation $q(r)$ satisfies
\begin{equation}\label{q-eqn}
q_r+\left(\frac{n-2}{r}+\beta rf(r)^{1-m}\right)q(r)=-\alpha rf(r)^{1-m}-\frac{m}{r}q(r)^2\quad\forall 0<r<\3.
\end{equation}
Let
\begin{equation}\label{F-defn}
F(r)=\exp\left(-\beta\int_r^{\3/2}\rho f(\rho)^{1-m}\,d\rho\right)\quad\forall 0<r<\3/2.
\end{equation}
Multiplying \eqref{q-eqn} by $r^{n-2}F(r)$ and integrating over $(r,\3/2)$, $0<r<\3/2$, we get
\begin{equation}\label{q-integral-relation}
r^{n-2}F(r)q(r)=(\3/2)^{n-2}q(\3/2)+\int_r^{\3/2}\left(\alpha\rho^{n-1} f(\rho)^{1-m}+m\rho^{n-3}q(\rho)^2\right)F(\rho)\,d\rho
\quad\forall 0<r<\3/2.
\end{equation}
By Lemma \ref{q-sequence-limit-lem}  there exists a decreasing sequence $\{r_i\}_{i=1}^{\infty}\subset(0,\3/2)$, $r_i\to 0$ as $i\to\infty$, that satisfies \eqref{q-squence-limit}. By \eqref{blow-up-rate-at-origin2} 
there exists a constant $0<r_0<\3$ such that
\begin{equation}\label{f-decay-origin-estimate}
\frac{\eta}{2}r^{-\gamma}\le f(r)\le 2\eta r^{-\gamma}\quad\forall 0<r\le r_0.
\end{equation}
We now divide the proof  into two cases.

\noindent $\underline{\text{\bf Case 1}}$: $\beta\ge 0$.

\noindent By \eqref{F-defn},
\begin{equation}\label{F-upper-estimate}
F(r)=F(r_0)\exp\left(-\beta\int_r^{r_0}\rho f(\rho)^{1-m}\,d\rho\right)
\le F(r_0)\quad\forall 0<r<r_0.
\end{equation}
Hence by \eqref{F-upper-estimate},
\begin{equation}\label{F-limit3}
r^{n-2}F(r)\to 0\quad\mbox{ as }r\to 0\quad\mbox{if }\beta\ge 0.
\end{equation}

\noindent $\underline{\text{\bf Case 2}}$: $\beta<0$.

\noindent \noindent By \eqref{F-defn} and \eqref{f-decay-origin-estimate},
\begin{align}\label{F-lower-estimate}
F(r)=&F(r_0)\exp\left(-\beta\int_r^{r_0}\rho f(\rho)^{1-m}\,d\rho\right)\notag\\
\ge&F(r_0)\exp\left(\frac{|\beta|\eta^{1-m}}{2^{1-m}}\int_r^{r_0}\rho^{1-\gamma (1-m)}\,d\rho\right)\notag\\
=&F(r_0)\exp\left(\frac{|\beta|\eta^{1-m}\left(r^{2-\gamma (1-m)}-r_0^{2-\gamma (1-m)}\right)}{2^{1-m}(\gamma(1-m)-2 )}\right)\quad\forall 0<r<r_0\quad\mbox{ if }\gamma>\frac{2}{1-m}.
\end{align}
and
\begin{align}\label{F-upper-estimate2}
F(r)=&F(r_0)\exp\left(-\beta\int_r^{r_0}\rho f(\rho)^{1-m}\,d\rho\right)\notag\\
\le&F(r_0)\exp\left((2\eta)^{1-m}|\beta|\int_r^{r_0}\rho^{1-\gamma (1-m)}\,d\rho\right)\notag\\
=&F(r_0)\exp\left(\frac{(2\eta)^{1-m}|\beta|\left(r_0^{2-\gamma (1-m)}-r^{2-\gamma (1-m)}\right)}{2-\gamma(1-m)}\right)\notag\\
\le&F(r_0)\exp\left(\frac{(2\eta)^{1-m}|\beta| r_0^{2-\gamma (1-m)}}{2-\gamma(1-m)}\right)\quad\forall 0<r<r_0\quad\mbox{ if }0<\gamma<\frac{2}{1-m}.
\end{align}
Hence by \eqref{F-lower-estimate} and \eqref{F-upper-estimate2},
\begin{equation}\label{F-limit6}
\left\{\begin{aligned}
&r^{n-2}F(r)\to\infty\quad\mbox{ as }r\to 0\quad\mbox{ if }\beta<0\mbox{ and }\gamma>\frac{2}{1-m}\\
&r^{n-2}F(r)\to 0\quad\mbox{ as }r\to 0\quad\mbox{ if }\beta<0\mbox{ and }0<\gamma<\frac{2}{1-m}.
\end{aligned}\right.
\end{equation}
By \eqref{q-squence-limit}, \eqref{q-integral-relation}, \eqref{F-limit3} and \eqref{F-limit6},
\begin{align}\label{num-demin-infty-zero2}
&(\3/2)^{n-2}q(\3/2)+\int_{r_i}^{\3/2}\left(\alpha\rho^{n-1} f(\rho)^{1-m}+m\rho^{n-3}q(\rho)^2\right)F(\rho)\,d\rho\notag\\
=&r_i^{n-2}F(r_i)q(r_i)\to\left\{\begin{aligned}
&0\qquad\mbox{ as }i\to\infty\quad\mbox{ if }\beta\ge 0\\
&0\qquad\mbox{ as }i\to\infty\quad\mbox{ if }\beta<0\mbox{ and }0<\gamma<\frac{2}{1-m}\\
&-\infty\quad\mbox{ as }i\to\infty\quad\mbox{ if }\beta<0\mbox{ and }\gamma>\frac{2}{1-m}.
\end{aligned}\right.
\end{align}
Hence by  \eqref{q-squence-limit}, \eqref{q-integral-relation}, \eqref{num-demin-infty-zero2} and the l'Hospital rule,
\begin{align}\label{limit-q1}
-\gamma=\lim_{i\to\infty}q(r_i)=&\lim_{i\to\infty}\frac{(\3/2)^{n-2}q(\3/2)+\int_{r_i}^{\3/2}\left(\alpha\rho^{n-1} f(\rho)^{1-m}+m\rho^{n-3}q(\rho)^2\right)F(\rho)\,d\rho}{r_i^{n-2}F(r_i)}\notag\\
=&-\lim_{i\to\infty}\frac{\left(\alpha r_i^{n-1}f(r_i)^{1-m}+mr_i^{n-3}q(r_i)^2\right)F(r_i)}{(n-2)r_i^{n-3}F(r_i)
+r_i^{n-2}F(r_i)\cdot\beta r_if(r_i)^{1-m}}\notag\\
=&-\lim_{i\to\infty}\frac{\alpha r_i^2f(r_i)^{1-m}+m\gamma^2}{n-2+\beta r_i^2f(r_i)^{1-m}}.
\end{align}
Now by \eqref{blow-up-rate-at-origin2},
\begin{equation}\label{f-limit7}
\left\{\begin{aligned}
&r_i^2f(r_i)^{1-m}\to\infty\quad\mbox{ as }i\to\infty\quad\mbox{ if }\gamma>\frac{2}{1-m}\\
&r_i^2f(r_i)^{1-m}\to 0\quad\mbox{ as }i\to\infty\quad\mbox{ if }0<\gamma<\frac{2}{1-m}.
\end{aligned}\right.
\end{equation}
Hence if $0<\gamma<\frac{2}{1-m}$, then by \eqref{limit-q1} and \eqref{f-limit7} we get
\begin{equation*}
\gamma=\frac{m\gamma^2}{n-2}\quad\Rightarrow\quad\gamma=\frac{n-2}{m}>\frac{2}{1-m}
\end{equation*}
and contradiction arises. Thus $\gamma>\frac{2}{1-m}$. 

Suppose $\beta=0$ and $\alpha\ne 0$ holds. Then \eqref{limit-q1} and \eqref{f-limit7}, we get $\gamma=\pm\infty$ and contradiction arises. Hence either (i) holds or  
\begin{equation}\label{alpha-beta-relation7}
\beta\ne 0\quad\mbox{ and }\quad\alpha\in\mathbb{R}
\end{equation}
holds. Suppose \eqref{alpha-beta-relation7} holds. Then by \eqref{limit-q1} and \eqref{f-limit7}, 
we get \eqref{alpha-beta-gamma-relation} and the theorem follows.

{\hfill$\square$\vspace{6pt}}

\noindent{\ni{\it Proof of Theorem \ref{existence-thm2}:}} 

\noindent Let $q(r)$ be given by \eqref{q-defn}, $\gamma=\frac{2}{1-m}$ and $F$ be given by \eqref{F-defn}. Let $r_0\in (0,\3)$ and $\{r_i\}_{i=1}^{\infty}$ be as in the proof of Theorem \ref{existence-thm1}. Then  \eqref{q-integral-relation}
and \eqref{f-decay-origin-estimate} holds.

We now divide the proof  into three cases.

\noindent $\underline{\text{\bf Case 1}}$: $\beta\ge 0$.

\noindent By \eqref{F-defn}, we get \eqref{F-upper-estimate}. Hence by \eqref{F-upper-estimate}, \eqref{F-limit3} holds.

\noindent $\underline{\text{\bf Case 2}}$: $\beta<0$ and $n-2-|\beta|\eta^{1-m}<0$.

\noindent We choose $a_1\in (0,1)$ such that $n-2-(a_1\eta)^{1-m}|\beta|<0$.
By \eqref{blow-up-rate-at-origin2} there exists a constant $0<b_0<r_0$ such that
\begin{equation}\label{f-decay-origin-estimate2}
a_1\eta r^{-\gamma}\le f(r)\le 2\eta r^{-\gamma}\quad\forall 0<r\le b_0.
\end{equation}
By \eqref{F-defn} and \eqref{f-decay-origin-estimate2},
\begin{align}\label{F-lower-estimate3}
F(r)=&F(b_0)\exp\left(-\beta\int_r^{b_0}\rho f(\rho)^{1-m}\,d\rho\right)\notag\\
\ge&F(b_0)\exp\left((a_1\eta)^{1-m}|\beta|\int_r^{b_0}\rho^{1-\gamma (1-m)}\,d\rho\right)\notag\\
\ge&F(b_0)\exp\left((a_1\eta)^{1-m}|\beta|\log(r_0'/r)\right)\notag\\
=&F(b_0)\left(\frac{b_0}{r}\right)^{(a_1\eta)^{1-m}|\beta|}\quad\forall 0<r<b_0.
\end{align}
Hence by \eqref{F-lower-estimate3},
\begin{equation}\label{F-limit8}
r^{n-2}F(r)\ge F(b_0)b_0^{(a_1\eta)^{1-m}|\beta|}r^{n-2-(a_1\eta)^{1-m}|\beta|}\to\infty\quad\mbox{ as }r\to 0.
\end{equation}
By \eqref{q-squence-limit}, \eqref{q-integral-relation}, \eqref{F-limit3} and \eqref{F-limit8}, for case 1 and case 2 we have
\begin{align}\label{num-demin-infty-zero3}
&(\3/2)^{n-2}q(\3/2)+\int_{r_i}^{\3/2}\left(\alpha\rho^{n-1} f(\rho)^{1-m}+m\rho^{n-3}q(\rho)^2\right)F(\rho)\,d\rho\notag\\
=&r_i^{n-2}F(r_i)q(r_i)\to -\infty\mbox{ or }0\quad\mbox{ as }i\to\infty.
\end{align}
Hence by  \eqref{q-squence-limit}, \eqref{q-integral-relation},  \eqref{num-demin-infty-zero3} and the l'Hospital's rule, for case 1 and case 2 we get \eqref{limit-q1}. Since $\gamma=\frac{2}{1-m}$, by \eqref{blow-up-rate-at-origin2} and \eqref{limit-q1},
\begin{equation*}
\frac{2}{1-m}\cdot(n-2+\beta\eta^{1-m})=\alpha\eta^{1-m}+\frac{4m}{(1-m)^2}\quad
\Rightarrow\quad\left(\alpha-\frac{2\beta}{1-m}\right)\eta^{1-m}=\frac{2(n-2-nm)}{(1-m)^2}
\end{equation*}
and \eqref{alpha-beta-relation5} follows.

\noindent $\underline{\text{\bf Case 3}}$: $\beta<0$ and $n-2-|\beta|\eta^{1-m}\ge 0$.

\noindent Since $n-1-|\beta|\eta^{1-m}\ge 1$, there exists a constant $a_2>1$ such that $n-1-(a_2\eta)^{1-m}|\beta|>0$.
By \eqref{blow-up-rate-at-origin2} there exists a constant $0<b_1<r_0$ such that
\begin{equation}\label{f-decay-origin-estimate4}
a_1\eta r^{-\gamma}\le f(r)\le a_2\eta r^{-\gamma}\quad\forall 0<r\le b_1.
\end{equation}
By \eqref{F-defn} and \eqref{f-decay-origin-estimate4},
\begin{align}\label{F-lower-estimate13}
F(r)\le&F(b_1)\exp\left((a_2\eta)^{1-m}|\beta|\int_r^{b_1}\rho^{1-\gamma (1-m)}\,d\rho\right)\notag\\
\le&F(b_1)\exp\left((a_2\eta)^{1-m}|\beta|\log(b_1/r)\right)\notag\\
=&F(b_1)\left(\frac{b_1}{r}\right)^{(a_2\eta)^{1-m}|\beta|}\quad\forall 0<r<b_1.
\end{align}
Then  by \eqref{F-lower-estimate13}  we have
\begin{equation}\label{F-limit11}
r^{n-1}F(r)\le F(b_1)b_1^{(a_2\eta)^{1-m}|\beta|}r^{n-1-(a_2\eta)^{1-m}|\beta|}\to 0\quad\mbox{ as }r\to 0.
\end{equation}
By \eqref{q-squence-limit}, \eqref{q-integral-relation}, \eqref{F-limit11},  we have
\begin{align}\label{num-demin-infty-zero6}
&(\3/2)^{n-2}q(\3/2)r_i+r_i\int_{r_i}^{\3/2}\left(\alpha\rho^{n-1} f(\rho)^{1-m}+m\rho^{n-3}q(\rho)^2\right)F(\rho)\,d\rho\notag\\
=&r_i^{n-1}F(r_i)q(r_i)\to 0\quad\mbox{ as }i\to\infty.
\end{align}
Since $\gamma=\frac{2}{1-m}$, by  \eqref{blow-up-rate-at-origin2}, \eqref{q-squence-limit}, \eqref{q-integral-relation},  \eqref{num-demin-infty-zero6} and the l'Hospital rule, 
\begin{align}\label{limit-q5}
-\gamma=\lim_{i\to\infty}q(r_i)=&\lim_{i\to\infty}\frac{(\3/2)^{n-2}q(\3/2)r_i+r_i\int_{r_i}^{\3/2}\left(\alpha\rho^{n-1} f(\rho)^{1-m}+m\rho^{n-3}q(\rho)^2\right)F(\rho)\,d\rho}{r_i^{n-1}F(r_i)}\notag\\
=&\lim_{i\to\infty}\frac{(\3/2)^{n-2}q(\3/2)+\int_{r_i}^{\3/2}\left(\alpha\rho^{n-1} f(\rho)^{1-m}+m\rho^{n-3}q(\rho)^2\right)F(\rho)\,d\rho}{(n-1)r_i^{n-2}F(r_i)
+r_i^{n-1}F(r_i)\cdot\beta r_if(r_i)^{1-m}}\notag\\
&\quad -\lim_{i\to\infty}\frac{\left(\alpha r_i^{n-1}f(r_i)^{1-m}+mr_i^{n-3}q(r_i)^2\right)r_iF(r_i)}{(n-1)r_i^{n-2}F(r_i)+r_i^{n-1}F(r_i)\cdot\beta r_if(r_i)^{1-m}}\notag\\
=&\lim_{i\to\infty}\frac{(\3/2)^{n-2}q(\3/2)+\int_{r_i}^{\3/2}\left(\alpha\rho^{n-1} f(\rho)^{1-m}+m\rho^{n-3}q(\rho)^2\right)F(\rho)\,d\rho}{r_i^{n-2}F(r_i)\left(n-1
+\beta r_i^2f(r_i)^{1-m}\right)}\notag\\
&\quad -\lim_{i\to\infty}\frac{\alpha r_i^2f(r_i)^{1-m}+m\gamma^2}{n-1+\beta r_i^2f(r_i)^{1-m}}\notag\\
=&-\frac{\gamma+\alpha\eta^{1-m}+m\gamma^2}{n-1+\beta\eta^{1-m}}
\end{align}
and \eqref{alpha-beta-relation5} follows.

{\hfill$\square$\vspace{6pt}}

Note that Theorem \ref{existence-thm3}, Corollary \ref{non-existence-cor2} and Theorem \ref{existence-thm4} follows directly from Theorem \ref{existence-thm1} and Theorem \ref{existence-thm2}. 

\noindent{\ni{\it Proof of Corollary \ref{alpha-beta-neg-sign-cor}:}} 

\noindent By Theorem \ref{existence-thm1}, \eqref{alpha-beta-gamma-relation} holds. Then by \eqref{alpha-beta-forward-sss} and \eqref{alpha-beta-gamma-relation},
\begin{equation*}
\frac{2\beta-1}{\beta(1-m)}>\frac{2}{1-m}\quad\Rightarrow\quad\beta<0\quad\Rightarrow\quad\alpha<0 
\end{equation*}
and \eqref{alpha-beta-neg-sign} follows.

{\hfill$\square$\vspace{6pt}}

\noindent{\ni{\it Proof of Corollary \ref{alpha-beta-positive-sign-cor}:}} 

\noindent By Theorem \ref{existence-thm1}, \eqref{alpha-beta-gamma-relation} holds. Then by \eqref{alpha-beta-backward-sss} and \eqref{alpha-beta-gamma-relation},
\begin{equation*}
\frac{2\beta+1}{\beta(1-m)}>\frac{2}{1-m}\quad\Rightarrow\quad\beta>0\quad\Rightarrow\quad\alpha>0 
\end{equation*}
and \eqref{alpha-beta-positive-sign} follows.

{\hfill$\square$\vspace{6pt}}

\end{document}